\title{Extending a result of Carlitz and McConnel to polynomials which are not permutations}
\author{Bence Csajb\'ok\thanks{
		The author is supported by the J\'anos Bolyai Research Scholarship of the Hungarian Academy of Sciences.}}
\date{}
\newcommand{\F}{{\mathbb F}}
\newtheorem{theorem}{Theorem}[section]
\newtheorem{corollary}[theorem]{Corollary}
\newtheorem{result}[theorem]{Result}
\DeclareMathOperator{\AG}{{AG}}
\begin{document}
\maketitle

\begin{abstract}
Let $D$ denote the set of directions determined by the graph of a polynomial $f$ of $\mathbb{F}_q[x]$, where $q$ is a power of the prime $p$. If $D$ is contained in a multiplicative subgroup $M$ of $\F_q^\times$, then by a result of Carlitz and McConnel it follows that $f(x)=ax^{p^k}+b$ for some $k\in \mathbb{N}$. Of course, if $D\subseteq M$, then $0\notin D$ and hence $f$ is a permutation. If we assume the weaker condition $D\subseteq M \cup \{0\}$, then $f$ is not necessarily a permutation, but Sziklai conjectured that $f(x)=ax^{p^k}+b$ follows also in this case. When $q$ is odd, and the index of $M$ is even, then a result of Ball, Blokhuis, Brouwer, Storme and Sz\H onyi combined with a result of McGuire and G\"olo\u{g}lu proves the conjecture. Assume $\deg f\geq 1$. We prove that if the size of $D^{-1}D=\{d^{-1}d' : d\in D\setminus \{0\},\, d'\in D\}$ is less than $q-\deg f+2$, then $f$ is a permutation of $\F_q$. We use this result to verify the conjecture of Sziklai.
\end{abstract}



\section{Introduction}

Let $\F_q$ denote the finite field of $q=p^n$ elements, where $p$ is a prime. 
If $f$ is an $\F_q \rightarrow \F_q$ function then the affine $q$-set $\{(x,f(x)):x\in \F_q\}\subseteq \AG(2,q)$ is called the graph of $f$, the subset $D_f:=\{(f(x)-f(y))/(x-y) : x,y\in \F_q, x\neq y\}$ of $\F_q$ is called the set of directions, or slopes, determined by (the graph of) $f$. 
Each $\F_q \rightarrow \F_q$ function can be uniquely represented by a polynomial of $\F_q[x]$ and of degree at most $q-1$, so we will consider polynomials instead of function. For a subset $A$ of $\F_q$ we will denote $A\setminus \{0\}$ by $A^*$. 

When $d>1$ is a divisor of $q-1$ then put 
\[M_d := \{x^d : x \in \F_q^*\}.\] 
The following result was proved first by Carlitz in the case of $d=2$ \cite{Carlitz} and then generalised by McConnel for other divisors $d$ of $q-1$ \cite{McConnel}. 

\begin{result}
	\label{res1}
	If $D_f \subseteq M_d$, then $f$ is of the form $f(x) = a + bx^{p^k}$.
\end{result}

There are several generalisations and different proofs of this result, due to Bruen and Levinger \cite{BL}, Grundh\"ofer \cite{nemet}, Lenstra \cite{Lenstra}, see  \cite[Section 9]{Jones} for a survey on these results and their relation with the Paley graph.

Since $0 \notin D_f$ yields $f(x)\neq f(y)$ for each $x\neq y$, it is obvious that only permutations can satisfy the condition $D\subseteq M_d$. This is not the case anymore if we allow $D \subseteq M_d \cup \{0\}$. In \cite[pg. 114]{polybook} Sziklai conjectured that $f(x)=ax^{p^k}+b$ holds also when one replaces $M_d$ with $M_d \cup \{0\}$ in the statement of Result \ref{res1}. To present what is known regarding this conjecture, we need to recall some definitions. An $\F_q \rightarrow \F_q$ function $f$ is called additive if $f(a+b)=f(a)+f(b)$ for each $a,b\in \F_q$. Such functions correspond to polynomials
$a_0x+a_1x^p+\ldots+a_{n-1}x^{p^{n-1}} \in \F_q[x]$. 
If $f$ is additive and $\alpha \in \F_q$, then we we will call $f+\alpha$ an affine polynomial. An important result on directions is the following, due to Ball, Blokhuis, Brouwer, Storme, Sz\H{o}nyi \cite{BBBSSz} and Ball \cite{Ball}.

\begin{result}
	\label{res2}
	If $|D_f|\leq (q+1)/2$, then $f$ is an affine polynomial.
\end{result}

If $f$ is additive, then for the affine polynomial $g=f+\alpha$ it holds that $D_g=\{f(x)/x : x\in \F_q^*\}=D_f$. The following result is due to McGuire and G\"olo\u{g}lu \cite{McGuire}.

\begin{result}
	\label{res3}
	If $q$ is odd, $f$ is additive and $D_f \subseteq M_2 \cup \{0\}$, then $f(x)=ax^{p^k}$.
\end{result}

If $D_g \subseteq M_2 \cup \{0\}$, then $|D_g|\leq (q+1)/2$ and hence by Result \ref{res2} $g=f+\alpha$ for some additive $f$ with $D_f=D_g$, thus Result \ref{res3} proves Sziklai's conjecture in case of odd $q$ and even $d$. In \cite{McGuire} the authors used Kloosterman sums and it seems to be that their technique cannot be used to prove the conjecture when $d$ is odd.

\medskip

Our key result is Theorem \ref{main}. It's proof was inspired by the recent manuscript \cite{kyle} of C. H. Yip who used Result \ref{res2} in a clever way to strengthen Result \ref{res1}. In Corollary \ref{cor2} we extend Yip's result to polynomials which are not necessarily permutations. 
Our notation is standard, if $A,B \subseteq \F_q$ then  $A^{-1}=\{a^{-1} : a \in A^*\}$ and $AB=\{ab : a\in A, b\in B\}$. If $c\in \F_q$, then $A-c=\{a-c : a \in A\}$. 
In Theorem \ref{main2} and Corollary \ref{cor1} we present conditions on the size of $D_f^{-1}D_f$ which ensure that $f$ is a permutation. In Corollary \ref{conj} we prove Sziklai's conjecture. 

\section{New Results}

Our first result shows how a simple combinatorial property of the graph of $f$ implies an algebraic property of $D_f$.

\begin{theorem}
	\label{main}
	Assume that the line of equation $y=mx+b$ meets the graph of $f\in \F_q[x]$ in $k$ points for some $1<k<q$. Then
	\[|(D_f-m)^{-1} (D_f-m)|\geq q-k+2.\]
\end{theorem}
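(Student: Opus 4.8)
The plan is to reduce to the horizontal case and then exploit a telescoping identity among slopes. First I would pass from $f$ to $g(x):=f(x)-mx$. Since $(g(x)-g(y))/(x-y)=(f(x)-f(y))/(x-y)-m$, we have $D_g=D_f-m$, so the quantity to be bounded is exactly $|D_g^{-1}D_g|$. Moreover the line $y=mx+b$ meets the graph of $f$ in $k$ points precisely when $g(x)=b$ has $k$ solutions; writing $S:=\{x\in\Fq : g(x)=b\}$, this says $|S|=k$. As $1<k<q$, the set $S$ contains at least two points, $\Fq\setminus S$ is nonempty, and $g$ is nonconstant.

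Next I would separate the zero slope. Because $g$ takes the value $b$ at the $\geq 2$ points of $S$, we have $0\in D_g$; write $T:=D_g\setminus\{0\}$ for the set of nonzero directions, which is nonempty since $g$ is nonconstant. Unwinding the definition gives $D_g^{-1}D_g=(T^{-1}T)\cup\{0\}$, and since every element of $T^{-1}T$ is nonzero this union is disjoint, so $|D_g^{-1}D_g|=|T^{-1}T|+1$. Hence it suffices to prove $|T^{-1}T|\geq q-k+1$.

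The heart of the argument is a telescoping trick. Fix two distinct points $s,s'\in S$. For every $x\in\Fq\setminus S$ we have $g(x)\neq b$, so both slopes $d:=(g(x)-b)/(x-s)$ and $d':=(g(x)-b)/(x-s')$ are nonzero directions of $g$, i.e.\ they lie in $T$; their ratio collapses to
\[ d^{-1}d'=\frac{x-s}{x-s'}\in T^{-1}T. \]
The map $x\mapsto (x-s)/(x-s')$ is a M\"obius transformation (its determinant $s-s'$ is nonzero), hence injective on $\Fq$, so as $x$ runs through the $q-k$ elements of $\Fq\setminus S$ it produces $q-k$ \emph{distinct} elements of $T^{-1}T$; none of them equals $1$, since $(x-s)/(x-s')=1$ would force $s=s'$. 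Adjoining the element $1=d^{-1}d\in T^{-1}T$ then yields $|T^{-1}T|\geq q-k+1$, and combined with the previous step this gives $|D_g^{-1}D_g|\geq q-k+2$, as required.

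The only real subtlety I anticipate is the tightness of the count: one must harvest exactly the $q-k$ distinct ratios coming from points outside $S$ and then observe that the identity $1$ is always available in $T^{-1}T$ yet always missing from that list, which is precisely what upgrades the bound from $q-k$ to $q-k+1$ (and then to $q-k+2$ after restoring the zero slope). Everything else is routine bookkeeping about which slopes are zero and the use of the hypotheses $k>1$ (to have two distinct points in $S$) and $k<q$ (to have points outside $S$).
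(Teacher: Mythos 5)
Your proof is correct, and it reaches the paper's bound by a genuinely simpler counting device, so it is worth spelling out the comparison. Both arguments share the same skeleton: normalize to slope zero (you keep $g=f-mx$ and its level set $S$; the paper additionally subtracts $b$ and translates so that $0$ becomes a root, which is cosmetic), note that $0\in D_g$ contributes a disjoint $+1$, and exploit the telescoping identity by which the ratio of two slopes from a non-root $x$ to two roots collapses to $(x-s)/(x-s')$. The difference is in how the harvested elements of $D_g^{-1}D_g$ are counted. The paper collects the two-parameter family $H=\left\{ x/(x-y) : h(x)\neq 0,\ h(y)=0 \right\}$ (one root frozen at $0$ by the translation, the other root $y$ and the non-root $x$ both varying) and bounds $|H|$ from below by an exclusion argument: a value $\alpha\notin\{0,1\}$ can be missing from $H$ only if $\beta=\alpha/(\alpha-1)$ multiplicatively stabilizes the set of nonzero roots, and there are at most $k-2$ such $\beta$. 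You instead freeze both roots $s,s'$ and vary only $x$, so distinctness of the $q-k$ ratios is immediate from injectivity of the M\"obius map $x\mapsto (x-s)/(x-s')$ --- no stabilizer analysis at all. Both counts yield exactly $q-k+1$ before restoring the zero slope, so nothing is lost; your route is shorter and more transparent, while the paper's argument incidentally says a bit more, namely that its larger set $H$ can only be small when the root set has a large multiplicative stabilizer. One small imprecision to fix: the M\"obius map is undefined at $x=s'$, so ``injective on $\F_q$'' should read ``injective on $\F_q\setminus\{s'\}$''; this is harmless in your argument because $s'\in S$ and your $x$ ranges over $\F_q\setminus S$.
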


\begin{proof}
%
	Put $g(x):=f(x)-mx-b$. Then $g$ has exactly $k$ distinct roots. Let $a$ be one of them and define $h(x):=g(x+a)$. Clearly 
	$h$ has exactly $k$ distinct roots and $0$ is one of them. Also,
	
	\[D_h=\left\{\frac{h(x)-h(y)}{x-y} : x,y\in \F_q, x\neq y\right\}=\]
	\[\left\{\frac{f(x+a)-f(y+a)-m(x-y)}{(x+a)-(y+a)} : x,y\in \F_q, x\neq y\right\}=D_f-m.\]
	Denote by $r_1,r_2,\ldots,r_{k-1}$ the distinct non-zero roots of $h$. 
	
	We claim that
\begin{equation}
		\label{eq1}
H:=\left\{ \frac{x}{x-y} :x,y\in \F_q,\, h(x)\neq 0,\, h(y)=0 \right\} \subseteq D_h^{-1} D_h,
\end{equation}
	and 
\begin{equation}
	\label{eq2}
	|H|\geq q-k+1.	
\end{equation}
To prove \eqref{eq1} take an element $x/(x-y)$ from $H$ (i.e., $h(x)\neq 0$, $h(y)=0$) and put $c=x/h(x)$. Since $h(0)=0$, we have
\[c=\frac{x}{h(x)}=\frac{x-0}{h(x)-h(0)}\in D_h^{-1}.\]
Also, since $h(y)=0$, we have
\[\frac{x}{x-y}=c \frac{h(x)-h(y)}{x-y}\in D_h^{-1}D_h.\]
To prove \eqref{eq2} first note that $0,1\in D_h^{-1}D_h$, $0\notin H$ and $1 \in H$. 
Assume that $\alpha\in \F_q\setminus \{0,1\}$ is not contained in $H$. Then the solutions in $x$ of the equations 
\begin{equation}
	\label{eq3}
x/(x-r_i)=\alpha, \quad i\in \{1,2,\ldots, k-1\},
\end{equation}
are in  $\{r_1,r_2,\ldots,r_{k-1}\}$.
If $x$ is a solution of $x/(x-r_i)=\alpha$, then
$x=r_i \alpha/(\alpha-1)$, so if $\alpha\notin H$, then
\[\alpha/(\alpha-1)\{r_1,r_2,\ldots,r_{k-1}\}\subseteq\{r_1,r_2,\ldots,r_{k-1}\},\]
and hence 
\[\alpha/(\alpha-1)\{r_1,r_2,\ldots,r_{k-1}\}=\{r_1,r_2,\ldots,r_{k-1}\}.\]
Put $\beta=\alpha/(\alpha-1)$ and $R=\{r_1,r_2,\ldots, r_{k-1}\}$. Then $\beta R=R$. Clearly there are at most $k-1$ such $\beta$s: $r_1/r_1,r_2/r_1,\ldots, r_{k-1}/r_1$, but $\beta\neq 1$ and hence there are at most $k-2$ values of $\alpha\in \F_q \setminus \{0,1\}$ for which \eqref{eq3} does not have a solution with $h(x)\neq 0$. It follows that $|H|\geq (q-2)-(k-2)+1$ (because of $1\in H$).  
Since $0\in D_h^{-1}D_h\setminus H$, we have  $|D_h^{-1}D_h|\geq |H|+1$, which proves the assertion.
\end{proof}

\begin{theorem}
	\label{main2}
	Let $f\in \F_q[x]$ be a polynomial of degree $k$ for some $0<k<q$. If $|D_f^{-1}D_f| < q-k+2$, then 
	$f$ is a permutation of $\F_q$.
\end{theorem}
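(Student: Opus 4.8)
The plan is to argue by contraposition: assuming that $f$ is not a permutation of $\F_q$, I will produce a horizontal line meeting the graph of $f$ in an admissible number of points and feed it into Theorem~\ref{main} with $m=0$, thereby forcing $|D_f^{-1}D_f|\geq q-k+2$ and contradicting the hypothesis. The essential observation is that Theorem~\ref{main} already carries out all of the substantive work; what remains is to exhibit the correct line, to check that its intersection count meets the hypotheses of that theorem, and to track a single inequality.

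First I would record that $f$ fails to be a permutation exactly when some value is attained at least twice, i.e. there is a $c\in\F_q$ for which $f(x)=c$ has at least two solutions. Taking $m=0$ and $b=c$, the line $y=mx+b$ then passes through at least two distinct points $(x,f(x))$ of the graph of $f$, so it meets the graph in $\ell$ points with $\ell\geq 2$. Next I would bound $\ell$ from above: since $1\leq k=\deg f<q$, the polynomial $f(x)-c$ has degree $k$ and hence at most $k$ roots, so $\ell\leq k<q$. Thus $1<\ell<q$, and Theorem~\ref{main} applies to this line, yielding $|(D_f-0)^{-1}(D_f-0)|\geq q-\ell+2$. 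Since $D_f-0=D_f$ and $\ell\leq k$ gives $q-\ell+2\geq q-k+2$, I obtain $|D_f^{-1}D_f|\geq q-k+2$, contradicting the assumed strict inequality. Hence $f$ is a permutation.

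The step deserving the most care is verifying that the intersection count $\ell$ lies strictly between $1$ and $q$, as Theorem~\ref{main} requires: the lower bound $\ell>1$ is precisely the failure of injectivity, while the upper bound $\ell<q$ comes from the degree constraint $k<q$, using that a polynomial of degree $k\geq 1$ attains each value at most $k$ times. The only other point to watch is a clash of notation, since the symbol $k$ denotes the number of intersection points in Theorem~\ref{main} but the degree of $f$ in the present statement; this is why I keep the intersection count as a separate symbol $\ell$ and invoke the monotonicity of $q-\ell+2$ in $\ell$ to pass from the intersection count $\ell$ to the degree $k$.
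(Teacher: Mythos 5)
Your proof is correct and takes essentially the same route as the paper: both arguments apply Theorem~\ref{main} with $m=0$ to a horizontal line through a repeated value of $f$, which meets the graph in at least $2$ and at most $k$ points, forcing $|D_f^{-1}D_f|\geq q-k+2$. Your write-up is in fact slightly more scrupulous than the paper's, since you make explicit the monotonicity step $q-\ell+2\geq q-k+2$ and absorb the degree-one case into the general argument rather than treating it separately.
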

\begin{proof}
	If $k=1$, then $f$ is a permutation. Assume $k>1$.
	If $f$ was not a permutation, then there would be a line with slope $0$ meeting $U_f$ in at least two and at most $k$ points (since $f(x)=c$ has at most $\deg f$ roots for each $c\in \F_q$). By Theorem \ref{main} it follows that $|D_f^{-1}D_f|\geq q-k+2$, contradicting the assumption. 
\end{proof}

\begin{corollary}
	\label{cor1}
	If $|D_f^{-1}D_f| \leq (q+1)/2$ holds for some $\F_q \rightarrow \F_q$ non-constant function $f$, then $f$ permutes $\F_q$. 
\end{corollary}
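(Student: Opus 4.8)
The plan is to reduce to Theorem \ref{main2} after first showing, via Result \ref{res2}, that $f$ must be an affine polynomial. The link to Result \ref{res2} is the elementary inequality $|D_f|\le |D_f^{-1}D_f|$. To see it I would note that a non-constant $f$ determines at least one non-zero slope, fix such a $d_0\in D_f^*$, and observe that $d_0^{-1}D_f\subseteq D_f^{-1}D_f$; since $d\mapsto d_0^{-1}d$ is injective this yields $|D_f|=|d_0^{-1}D_f|\le |D_f^{-1}D_f|\le (q+1)/2$. Result \ref{res2} then gives that $f$ is affine, so I may write $f=g+\alpha$ with $g$ additive.

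Next I would transfer the hypothesis to $g$ and exploit the rigid shape of additive polynomials. Translating by the constant $\alpha$ leaves the slope set unchanged, so $D_f=D_g$ and hence $|D_g^{-1}D_g|=|D_f^{-1}D_f|\le (q+1)/2$. Moreover $g$, being additive, is a $p$-polynomial $a_0x+a_1x^p+\cdots+a_{n-1}x^{p^{n-1}}$, so $\deg g\le p^{n-1}=q/p$; and $g$ is non-constant because $f$ is. Thus $1\le \deg g\le q/p$. Since $(q+1)/2<q-\deg g+2$ is equivalent to $\deg g<(q+3)/2$, and since $q/p\le q/2<(q+3)/2$, the hypothesis of Theorem \ref{main2} is met for $g$: indeed $|D_g^{-1}D_g|\le (q+1)/2<q-\deg g+2$. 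Theorem \ref{main2} then forces $g$ to permute $\F_q$, and therefore $f=g+\alpha$ permutes $\F_q$ as well.

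The step I expect to carry the real weight is the degree bound $\deg g\le q/p$. Without first knowing that $f$ is affine, the degree of $f$ could be as large as $q-1$ and Theorem \ref{main2} would be powerless, since $q-\deg f+2$ could drop below the assumed bound $(q+1)/2$. It is exactly the passage through Result \ref{res2} that collapses the degree to a power of $p$ strictly smaller than $q$, after which the numerics $2\deg g\le 2q/p\le q<q+3$ close the argument. I would pay particular attention to the even characteristic case, where $(q+1)/2$ is not an integer and the bound $\deg g\le q/2$ is tightest; there one checks that $(q+1)/2<q/2+2$ still holds with room to spare.
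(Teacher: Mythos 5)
Your proposal is correct and follows essentially the same route as the paper: establish $|D_f|\le |D_f^{-1}D_f|\le (q+1)/2$ using a non-zero slope, invoke Result \ref{res2} to represent $f$ by an affine polynomial of degree at most $q/p\le q/2$, and then apply Theorem \ref{main2} via the inequality $(q+1)/2 < q-q/2+2$. The only cosmetic difference is that you apply Theorem \ref{main2} to the additive part $g$ rather than to the affine polynomial $f$ itself, which changes nothing since $\deg f=\deg g$ and $D_f=D_g$.
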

\begin{proof}
	$f$ is non-constant, hence there is a non-zero element in $D_f$, so $D_f^{-1}\neq \emptyset$ and 
	$|D_f| \leq |D_f^{-1}D_f|$. By Result \ref{res2} $f$ can be represented by an affine polynomial of degree at most $q/p\leq q/2$. Then  
	\[|D_f^{-1}D_f|\leq \frac{q+1}{2}<q-q/2+2\leq q-\deg f +2,\]
	and the result follows from Theorem \ref{main2}.
\end{proof}

The next result proves Conjecture 18.11 form \cite[pg. 114]{polybook}.

\begin{corollary}
	\label{conj}
	If $D_f \subseteq M_d \cup \{0\}$, then $f(x)=ax^{p^k}+b$.
\end{corollary}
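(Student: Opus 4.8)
The plan is to reduce the statement to the permutation case and then quote the original Carlitz--McConnel theorem (Result~\ref{res1}) verbatim. The whole design of the preceding results points this way: the hypothesis $D_f\subseteq M_d\cup\{0\}$ is essentially stable under forming $D_f^{-1}D_f$, so $D_f^{-1}D_f$ stays small enough for Corollary~\ref{cor1} to force $f$ to be a permutation.

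First I would dispose of the degenerate case. If $f$ is constant then $D_f\subseteq\{0\}$ and $f(x)=b$, which already has the required shape (with $a=0$). So assume $f$ is non-constant, whence there exist $x\neq y$ with $f(x)\neq f(y)$ and $D_f^*\neq\emptyset$. Now I would compute the size of $D_f^{-1}D_f$ using that $M_d$ is a multiplicative subgroup of $\F_q^*$. Since $D_f^*\subseteq M_d$ and $M_d$ is closed under inversion, $D_f^{-1}\subseteq M_d$; and since $M_d$ is closed under multiplication,
\[
D_f^{-1}D_f \subseteq M_d\,(M_d\cup\{0\}) = M_d\cup\{0\}.
\]
Hence $|D_f^{-1}D_f|\le |M_d|+1=(q-1)/d+1$. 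Because $d\ge 2$, this is at most $(q-1)/2+1=(q+1)/2$. Applying Corollary~\ref{cor1} (legitimate, as $f$ is non-constant) I conclude that $f$ permutes $\F_q$.

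Finally, $f$ being a permutation gives $f(x)\neq f(y)$ whenever $x\neq y$, so $0\notin D_f$ and therefore $D_f\subseteq M_d$; Result~\ref{res1} then yields $f(x)=a+bx^{p^k}$, completing the argument. I do not expect a genuine obstacle here: the only point requiring care is the observation that $M_d$ being a group keeps $D_f^{-1}D_f$ inside $M_d\cup\{0\}$, which is exactly what makes the counting bound small enough to trigger Corollary~\ref{cor1}. The substance of the conjecture is thus carried entirely by Theorem~\ref{main}; this corollary is a short application of it.
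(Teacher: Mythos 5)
Your proposal is correct and follows essentially the same route as the paper: use the subgroup structure of $M_d$ to show $D_f^{-1}D_f\subseteq M_d\cup\{0\}$, bound its size by $(q+1)/2$, invoke Corollary~\ref{cor1} to get that $f$ is a permutation (handling the constant case separately, as the paper does via its dichotomy), and finish with Result~\ref{res1}. The only difference is expository: you dispose of the constant case up front, while the paper folds it into the conclusion of Corollary~\ref{cor1}.
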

\begin{proof}
	$D_f^{-1}D_f \subseteq M_d^{-1}(M_d\cup \{0\})=M_d \cup \{0\}$ and hence $|D_f^{-1}D_f|\leq |M_d|+1\leq (q+1)/2$. By Corollary \ref{cor1} $f$ is a constant function (that is, $a=0$), or it is a permutation and hence $0 \notin D_f$. The assertion follows from Result \ref{res1}.
\end{proof}

The next result weakens the condition on $f$ from \cite[Theorem 1.2]{kyle}, since we do not require $f$ to be a permutation.

\begin{corollary}
	\label{cor2}
	If for some $\F_q \rightarrow \F_q$ function $f$ it holds that $|D_f^{-1} D_f D_f^{-1}|\leq (q+1)/2$, then $f(x)=ax^{p^k}+b$.
\end{corollary}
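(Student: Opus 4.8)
The plan is to reduce to the case where $f$ is a permutation and then invoke Yip's theorem \cite[Theorem 1.2]{kyle}, which already settles that case. First I would dispose of the trivial possibility that $f$ is constant: then $f(x)=b=0\cdot x^{p^k}+b$ and we are done with $a=0$. So from now on assume $f$ is non-constant, which guarantees that $D_f$ contains a non-zero element and hence $D_f^*\neq\emptyset$.

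The key observation is the size comparison $|D_f^{-1}D_f|\leq|D_f^{-1}D_fD_f^{-1}|$. To see this, fix any $d_0\in D_f^*$ and consider the map $x\mapsto d_0^{-1}x$ on $\F_q$. Since $d_0^{-1}\neq 0$, this map is a bijection of $\F_q$, hence injective on $D_f^{-1}D_f$. Moreover it sends $D_f^{-1}D_f$ into $D_f^{-1}D_fD_f^{-1}$: an element $d_1^{-1}d_2$ with $d_1\in D_f^*,\,d_2\in D_f$ is carried to $d_0^{-1}d_1^{-1}d_2=d_1^{-1}d_2d_0^{-1}$, and by commutativity the latter lies in $D_f^{-1}D_fD_f^{-1}$ because $d_0^{-1}\in D_f^{-1}$. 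Injectivity then yields the claimed inequality.

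Combining this with the hypothesis gives $|D_f^{-1}D_f|\leq(q+1)/2$, so Corollary \ref{cor1} applies to the non-constant function $f$ and shows that $f$ permutes $\F_q$; in particular $0\notin D_f$. At this point $f$ satisfies exactly the hypotheses of \cite[Theorem 1.2]{kyle} (a permutation with $|D_f^{-1}D_fD_f^{-1}|\leq(q+1)/2$), and that result gives $f(x)=ax^{p^k}+b$, completing the argument.

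The main obstacle I anticipate is the first genuine step, the inclusion-plus-injection delivering $|D_f^{-1}D_f|\leq|D_f^{-1}D_fD_f^{-1}|$; once this is in place everything else is bookkeeping, since Corollary \ref{cor1} converts the triple-product bound into the permutation property and Yip's theorem disposes of the permutation case. One should also keep in mind that $D_f^{-1}D_f$ may contain $0$ (precisely when $0\in D_f$), but this causes no trouble, as the scalar map $x\mapsto d_0^{-1}x$ fixes $0$ and the target set contains $0$ in exactly the same situation.
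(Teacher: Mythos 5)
Your proposal follows essentially the same route as the paper's proof: dispose of the constant case, fix $d_0\in D_f^*$ and use the scalar injection $x\mapsto d_0^{-1}x$ to get $|D_f^{-1}D_f|\leq|D_f^{-1}D_fD_f^{-1}|\leq (q+1)/2$, conclude from Corollary \ref{cor1} that $f$ is a permutation, and finish by invoking \cite[Theorem 1.2]{kyle}. (Your injection argument is actually spelled out in more detail than in the paper, which states the inequality without proof.) The one place you diverge is in how you quote Yip's theorem: you assert its hypotheses are exactly ``a permutation with $|D_f^{-1}D_fD_f^{-1}|\leq(q+1)/2$,'' but the paper's own proof indicates that the hypothesis of \cite[Theorem 1.2]{kyle} is a bound on the set $D_f^{-1}\cup D_f^{-1}D_fD_f^{-1}$, which is why the paper adds a final bridging step: since $1\in D_f^{-1}D_f$, one has $D_f^{-1}\subseteq D_f^{-1}D_fD_f^{-1}$, hence $D_f^{-1}\cup D_f^{-1}D_fD_f^{-1}=D_f^{-1}D_fD_f^{-1}$, so the hypothesis on the union follows from the assumed bound on the triple product. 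Your write-up omits this step, so as it stands the application of Yip's theorem rests on a misquotation of its hypothesis. The repair is immediate from ingredients you already have --- with your fixed $d_0$, any $d^{-1}\in D_f^{-1}$ equals $d^{-1}d_0d_0^{-1}\in D_f^{-1}D_fD_f^{-1}$ --- so this is a one-line gap rather than a flaw in strategy, but you should verify the exact statement of the cited theorem and include the collapse of the union before invoking it.
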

\begin{proof}
	If $f$ is the constant function then the result trivially holds. If it is not, then $D_f^{-1}\neq \emptyset$ and hence $|D_f^{-1}D_f|\leq |D_f^{-1}D_fD_f^{-1}|\leq (q+1)/2$, thus $f$ is a permutation by Corollary \ref{cor1}. Then the result follows from \cite[Theorem 1.2]{kyle} and from the fact that $1\in D_f^{-1}D_f$ and hence $D_f^{-1}\subseteq D_f^{-1}D_fD_f^{-1}$, thus $D_f^{-1}\cup D_f^{-1}D_fD_f^{-1}=D_f^{-1}D_fD_f^{-1}$. 
\end{proof}

\medskip

\begin{flushleft}
	Bence Csajb\'ok \\
	ELTE E\"otv\"os Lor\'and University, Budapest, Hungary\\
	Department of Computer Science\\
	1117 Budapest, P\'azm\'any P.\ stny.\ 1/C, Hungary\\
	{{\em bence.csajbok@ttk.elte.hu}}
\end{flushleft}

\end{document}